\newcommand{\ra}{\rightarrow} 
\theoremstyle{plain}
\newtheorem{theorem}{Theorem}[section]
\newtheorem{cor}[theorem]{Corollary}
\newtheorem{lemma}[theorem]{Lemma}
\theoremstyle{definition}
\theoremstyle{remark}
\newtheorem{rem}[theorem]{Remark}
\newtheorem{rems}[theorem]{Remarks}
\begin{document} 

\title{The $k$-transformation on an interval with a hole}
\author{Nikita Agarwal\\
\footnotesize Department of Mathematics, \\
\footnotesize Indian Institute of Science Education and Research Bhopal, \\
\footnotesize email: nagarwal@iiserb.ac.in}


\date{}
\maketitle 
\begin{abstract} 
Let $T_{k}$ be the expanding map of $[0,1)$ defined by $T_{k}(x) = k x\ \text{mod 1}$, where $k\geq 2$ is an integer. Given $0\leq a<b\leq 1$, let $\mathcal{W}_{k}(a,b)=\{x\in [0,1)\ \vert \ T_{k}^nx\notin (a,b), \text{ for all } n\geq 0\}$ be the maximal $T$-invariant subset of $[0,1)\setminus (a,b)$. We examine the Hausdorff dimension of $\mathcal{W}_{k}(a,b)$ as $a$ and $b$ vary.
\end{abstract} 

\noindent {\bf Keywords:} Symbolic dynamics, Open dynamical systems, Expanding map, Hausdorff dimension.

\noindent \textbf{AMS Classification 2010}: 37A05, 28D05

\section{Introduction}
The study of dynamical systems with holes, also termed as \textit{open dynamical systems} was first proposed by Pianigiani and Yorke~\cite{PY}. It has recently attracted attention on account of both its dynamical interest and applications, we refer to~\cite{BKT, BC, BDM, CM, Dem, DemYoung, DWY, HS1, HS2}. Open dynamical systems has several applications including modelling and understanding biological or medical processes, ocean and atmospheric systems, trajectories of spacecraft, planetary motion, see~\cite{Openbook}. 

We now describe the general set-up of an open dynamical system. Let $(X,T)$ be a discrete dynamical system, where $X$ is a compact metric space and $T:X\rightarrow X$ is a continuous map with positive topological entropy. Let $H$ be an open connected subset of $X$, known as the \textit{hole}. The map $T:X\setminus H\ra X$ is called an \textit{open system}, since $X\setminus H$ may not be an invariant set under $T$. Let $\mathcal{W}(H)$ be the maximal $T$-invariant subset of $X\setminus H$. Clearly
\[
\mathcal{W}(H)=\{x\in X\ \vert \ T^nx\notin H, \ n\geq 0\}=X\setminus \bigcup_{n\leq 0}T^n(H) .
\]
The set $\mathcal{W}(H)$ consists of all the points in the state space whose orbit never intersects the hole $H$. This set is called the \textit{survivor set}. 

One can ask several interesting questions regarding the system $T\vert_{\mathcal{W}(H)}$ such as its ergodic properties, see~\cite{BC,CM} and references therein. In~\cite{BY}, the rate at which trajectories escape the hole was considered and it was proved that the escape rate depends not only on the size of the hole but also on its position in the state space. 

The survivor set $\mathcal{W}(H)$ has the following property: for $H, H'\subset X$, $\mathcal{W}(H)\cap \mathcal{W}(H')=\mathcal{W}(H\cup H')$, and thus if $H\subseteq H'$, then $\mathcal{W}(H)\supseteq \mathcal{W}(H')$. The latter property suggests that if the hole is large, the set $\mathcal{W}(H)$ may be countable or empty, whereas it may have positive Hausdorff dimension if the hole is small. The exisiting literature indicates that the size of the hole alone is not responsible for the size of $\mathcal{W}(H)$, its position also matters. 

For the doubling map $T_2$ defined as $T_{2}(x) = 2 x\ \text{mod 1}$ on the interval $[0,1)$, interval holes symmetric about the point $1/2$ were considered in~\cite{GS1}, and asymmetric interval holes were considered in~\cite{GS2}, and the problem of measuring the size of the corresponding survivor set was studied. In~\cite{Clark}, $\beta$-transformation $T_\beta:[0,1)\rightarrow [0,1)$ with any real number $\beta\in(1,2)$ defined as $T_{\beta}(x) = \beta x\ \text{mod 1}$ with an interval hole was considered, and holes were characterized based on whether the suvivor set is non-empty or uncountable. In~\cite{CHS}, the Baker's map with a convex hole was considered and the holes (dimension traps) for which the Hausdorff dimension of the survivor set is zero were examined. It was proved that hole which lies in the interior of the square is not a dimension trap. This work was the first such in higher dimensions. 

The focus of this paper is to examine the Hausdorff dimension of the survivor set for the map $T_k$ defined by $T_{k}(x) = k x\ \text{mod 1}$, ($k\geq 3, k\in\mathbb{N}$) on the interval $[0,1)$ with an interval hole $(a,b)$, where $0\leq a<b\leq 1$. Let $R_1$ and $R_2$ be the collections of intervals contained in $[0,1)$:
\[
R_1 = \left\lbrace(a,b)\ \vert\ b<\dfrac{k-1}{k}\ \text{or}\ a>\dfrac{1}{k}\ \right\rbrace, 
\]
\[
R_2=\left\lbrace(a,b)\ \vert\ a \leq \dfrac{1}{k}\ \text{and}\ b\geq\dfrac{k-1}{k}\ \right\rbrace.
\]

\begin{figure}
		\centering
		\includegraphics[width=.6\textwidth]{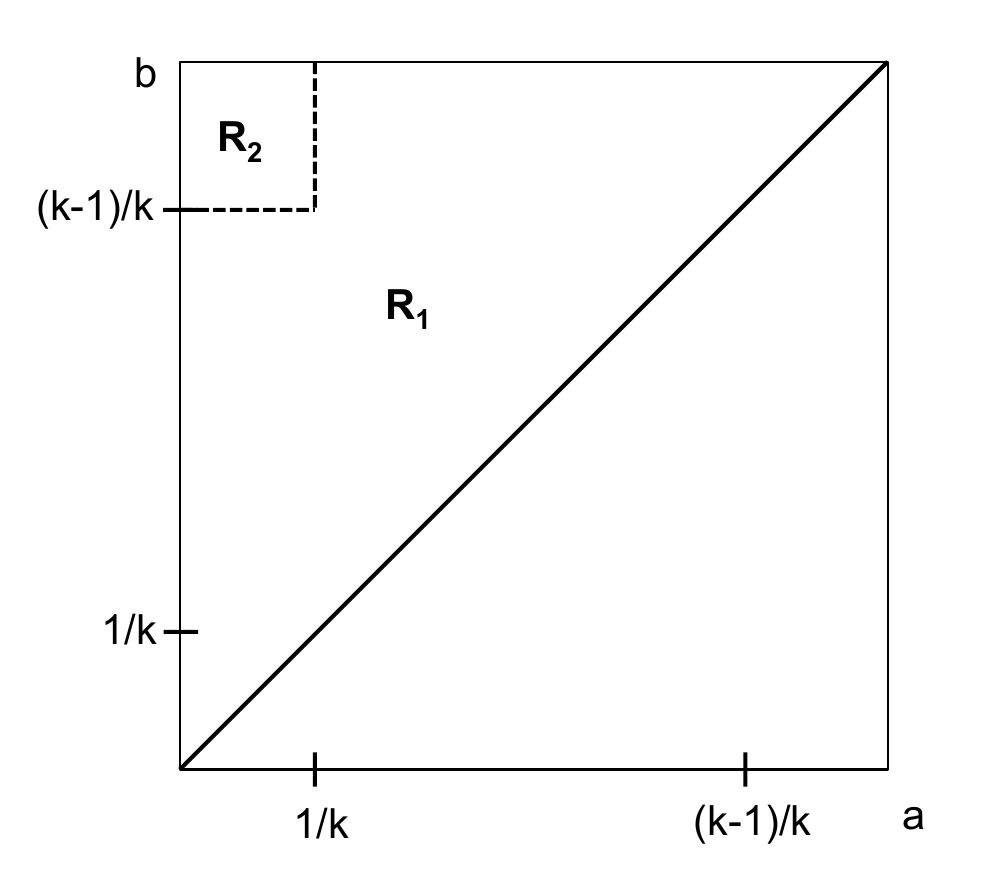}
		\caption{The collections $R_1$ and $R_2$.}
		\label{fig:0}
	\end{figure}	
	
Note that $R_1$ and $R_2$ cover all the possible holes $(a,b)$ with $0\leq a<b\leq 1$. We divide our analysis into two sections based on whether $(a,b)\in R_1$ or $(a,b)\in R_2$. In Section~\ref{R1}, we will prove that the Hausdorff dimension of the survivor set is positive when the hole $(a,b)\in R_1$. In Section~\ref{R2}, we will introduce a generalization of the Cantor set and the Cantor function which will be used to prove the main result (Theorem~\ref{mainthm}) when the hole $(a,b)\in R_2$. In this theorem, we give a necessary and sufficient condition for the Hausdorff dimension of the survivor set to be positive. We will conclude Section~\ref{R2} by describing the relation of our work with earlier results for the doubling map~\cite{GS1,GS2}. In Section~\ref{examgen}, we discuss a possible generalization of $T_{k}$ to higher dimensions.


\subsection{Expanding Map $T_{k}$ with a Hole} 
\noindent Consider the expanding map $T_{k}:[0,1)\rightarrow [0,1)$ with expansion constant $k\geq 2$ ($k\in \mathbb{N}$) defined as 
\[
T_{k}(x) = k x\ \text{mod 1}.
\]
It is well-known that $T_{k}$ is an ergodic map with respect to the one-dimensional Lebesgue measure (see for example~\cite[Proposition 4.4.2]{BS}). Let $H=(a,b)\subset [0,1)$ be an open interval. For $k\geq 3$, we wish to examine the Hausdorff dimension $d_{k}(H)$ of the survivor set $\mathcal{W}_{k}(H) = X\setminus \bigcup_{n\leq 0}T_k ^n(H)$. 

\begin{rem}
a) The results in this paper can be generalized to other kinds of holes. Since we are looking at the map $T_k$ on the circle, the hole $(a,b)\subset [0,1)$ could be taken as $H=(c,1)\cup [0,d)$ where $c<1$, $d\geq 0$, $d=(b+c-a)\ \text{mod }1$. Then $d_k(a,b)=d_k(H)$. Moreover, $x\in \mathcal{W}_k(a,b)$ if and only if $(x+c-a)\ \text{mod }1\in \mathcal{W}_k(H)$.\\
b) Consider an increasing piecewise linear function $h$ on $[0,1)$ which permutes the intervals $\left[ \dfrac{i}{k},\dfrac{i+1}{k}\right)$, $i=0,\dots,k-1$. Then for any $(a,b)\subset [0,1)$ and $H=h(a,b)$ (image of $(a,b)$ under $h$), $d_k(a,b)=d_k(H)$. Moreover $x\in \mathcal{W}_k(a,b)$ if and only if $h(x)\in \mathcal{W}_k(H)$.\\
c) The results in this paper are also applicable to all maps conjugate to the full shift on finitely many symbols.
	\end{rem}
	
	For the doubling map ($k=2$), Glendinning and Sidorov in~\cite{GS1} considered interval holes symmetric about the point $1/2$, and asymmetric interval holes in~\cite{GS2}. In this article, we restrict our attention to integers $k\geq 3$ and consider asymmetric interval holes. We now recall the main result from~\cite{GS2}.

\begin{theorem}\label{asymcor}
\cite[Corollary 3.9]{GS2}
The Hausdorff dimension $d_2(a,b)$ of $\mathcal{W}_{2}(a,b)$ is positive if $b-a<1-2a_*$, where $a_*\sim 0.41245$ is the Thue-Morse constant. 
\end{theorem}

\begin{theorem}\label{asym}
\cite[Theorem 1.2]{GS2}
\begin{eqnarray*}
\left\lbrace(a,b)\in \left(0,1/2\right)\times\left(1/2,1\right)\ \vert\ d_2(a,b)>0\right\rbrace = \left\lbrace(a,b)\ \vert\ b<\chi(a)\right\rbrace,
\end{eqnarray*}
where the function $\chi$ is described in Section~\ref{subsec:chi}.
\end{theorem}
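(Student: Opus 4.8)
The plan is to pass to symbolic dynamics and to reinterpret positivity of the Hausdorff dimension as positivity of the topological entropy of a subshift. Since $T_2$ expands by the constant factor $2$, the survivor set $\mathcal{W}_2(a,b)$ is coded, up to the Lebesgue-null set of dyadic ambiguities, by a subshift $\Sigma_{a,b}\subseteq\{0,1\}^{\mathbb{N}}$ of the shift $\sigma$, and the standard dimension formula gives $d_2(a,b)=h_{\mathrm{top}}(\sigma|_{\Sigma_{a,b}})/\log 2$; hence $d_2(a,b)>0$ if and only if $\Sigma_{a,b}$ has positive entropy, and the problem becomes purely combinatorial. Writing the binary expansions $a=(\alpha_n)_{n\geq 1}$ and $b=(\beta_n)_{n\geq 1}$, with $\alpha_1=0$ and $\beta_1=1$ because $a<1/2<b$, I would first record the order characterization of survival: a sequence $\omega\in\{0,1\}^{\mathbb{N}}$ lies in $\Sigma_{a,b}$ precisely when, for every $n\geq 0$, the tail $\sigma^n\omega$ satisfies $\sigma^n\omega\preceq\alpha$ whenever $\omega_{n+1}=0$ and $\sigma^n\omega\succeq\beta$ whenever $\omega_{n+1}=1$, where $\preceq$ denotes the lexicographic order (which mirrors the order on $[0,1)$). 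This exhibits $\Sigma_{a,b}$ as a lexicographic subshift governed entirely by the pair $(\alpha,\beta)$.

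The second step establishes monotonicity and the existence of the threshold. By the inclusion $\mathcal{W}(H)\supseteq\mathcal{W}(H')$ for $H\subseteq H'$ recalled in the introduction, enlarging the hole can only shrink $\Sigma_{a,b}$ and hence cannot increase its entropy. Fixing $a$ and increasing $b$, the entropy is therefore nonincreasing, so the set $\{b\in(1/2,1)\mid d_2(a,b)>0\}$ is an interval with left endpoint $1/2$. Letting $\chi(a)$ be its supremum (and $\chi(a)\leq 1/2$ when this set is empty) yields at once $d_2(a,b)>0$ for $b<\chi(a)$ and $d_2(a,b)=0$ for $b>\chi(a)$; the real content of the theorem is to identify $\chi$ explicitly and to resolve the boundary case $b=\chi(a)$.

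The heart of the proof, and the main obstacle, is the explicit combinatorial description of $\chi$ together with the matching upper and lower bounds on the entropy. For the lower bound (sufficiency: $b<\chi(a)$ forces positive entropy) I would, from the expansions $\alpha$ and $\beta$, construct two admissible finite words $u$ and $v$ such that every sequence in $\{u,v\}^{\mathbb{N}}$ has all of its tails on the correct side of the hole; such a free concatenation embeds a full $2$-shift and gives $h_{\mathrm{top}}(\sigma|_{\Sigma_{a,b}})\geq (\log 2)/\max(|u|,|v|)>0$. For the upper bound (necessity: $b\geq\chi(a)$ forces zero entropy) I would run a forcing/renormalization argument showing that the lexicographic constraints leave, beyond a bounded prefix, essentially no branching, so that the number of admissible words of length $n$ grows subexponentially and $h_{\mathrm{top}}=0$. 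The genuinely hard part is pinning down $\chi(a)$ so that these two bounds meet: its binary expansion is generated from that of $a$ by a self-referential rule---the asymmetric analogue of the role played by the Thue--Morse/Komornik--Loreti sequence in the symmetric case \cite{GS1}---and one must verify that this exact value is simultaneously the largest $b$ admitting a branching block and the smallest $b$ destroying all branching. A final separate computation at $b=\chi(a)$ confirms the strict inequality in the statement, namely that the Hausdorff dimension already vanishes on the boundary curve $b=\chi(a)$.
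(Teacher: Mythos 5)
First, a point of reference: the paper does not prove this statement at all --- it is imported verbatim from Glendinning and Sidorov (\cite[Theorem 1.2]{GS2}), and the citation is part of the statement. So there is no internal proof to compare with; your attempt has to be judged as a reconstruction of the proof in \cite{GS2}.

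Judged that way, there is a genuine gap, and it is exactly where you flag it. Your scaffolding is sound: the entropy--dimension reduction (the paper's Theorem~\ref{Fur}), the lexicographic characterization of the survivor subshift (using $a<1/2<b$ to split on the first symbol of each tail), and the monotonicity argument showing that $\{b\in(1/2,1)\mid d_2(a,b)>0\}$ is an initial segment, so that an abstract threshold $\sup\{b \mid d_2(a,b)>0\}$ exists. But with $\chi(a)$ \emph{defined} as that supremum, the equivalence $d_2(a,b)>0 \iff b<\chi(a)$ is nearly tautological (only the boundary case carries content). The theorem being cited says something much stronger: the threshold coincides with the \emph{explicit} function $\chi$ constructed in \cite[Theorem 2.13]{GS2}, whose value at $a$ is produced by a concrete combinatorial rule on the binary expansion of $a$ (Sturmian/balanced-word renormalization, the asymmetric counterpart of the Thue--Morse threshold of \cite{GS1}). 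Everything in your third paragraph --- constructing the two freely concatenable words $u,v$ when $b$ lies below the explicit threshold, proving subexponential growth of admissible words when $b$ lies at or above it, and showing the two bounds meet at exactly that value, including vanishing of the dimension at $b=\chi(a)$ --- is stated in the conditional (``I would construct\ldots'', ``I would run\ldots'') and never carried out. That material is not a routine verification; it is essentially the entire content of Sections 2--3 of \cite{GS2}, and without it the proposal is a plan for a proof rather than a proof.
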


	\subsection{Statement of the Main Results}
We now state the main results of this paper which will be proved in the later sections.

\begin{theorem}\label{m-2}
If $H=(a,b)\in R_1$, then $d_{k}(H)>0$.
\end{theorem}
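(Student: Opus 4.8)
The plan is to collapse the two families defining $R_1$ into one by symmetry, and then to place an explicit positive-entropy subshift inside the survivor set. By Lemma~\ref{invert} we have $d_k(a,b)=d_k(1-b,1-a)$, and if $\frac1k\le a<b\le 1$ then $0\le 1-b<1-a\le\frac{k-1}{k}$; so it suffices to treat the family $0\le a<b\le\frac{k-1}{k}$. The structural fact driving the argument is that $b\le\frac{k-1}{k}$ forces the top cylinder $[\frac{k-1}{k},1)$, i.e.\ the set of points whose $k$-expansion begins with the digit $k-1$, to be disjoint from the hole; thus any orbit point that begins with the digit $k-1$ automatically lies outside $(a,b)$.

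By Theorem~\ref{Fur} it is enough to produce a $\sigma_k$-invariant $Y\subseteq\pi_k^{-1}\mathcal{W}_k(a,b)$ with $h_{top}(Y)>0$. I would take $Y$ to be the set of all one-sided concatenations of the two finite words $u=(k-1)$ and $v=(k-2)(k-1)^N$, where $N\ge 0$ is chosen so that the cylinder $[v]$ sits above $b$; a short computation gives $\pi_k([v])=[\frac{k-1}{k}-k^{-(N+1)},\frac{k-1}{k})$, so the requirement is $k^{-(N+1)}\le\frac{k-1}{k}-b$. To see that each concatenation codes a surviving point, I would check that every one of its suffixes begins either with the digit $k-1$, in which case the point is $\ge\frac{k-1}{k}\ge b$ and so lies outside $(a,b)$, or with the full block $v$, in which case the point is $\ge\frac{k-1}{k}-k^{-(N+1)}\ge b$ and again lies outside $(a,b)$. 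These are the only two possibilities, because the letter $k-2$ occurs only as the first letter of a $v$-block and every other letter is $k-1$. Since $u$ and $v$ have different lengths, the number of admissible words of length $n$ grows exponentially (governed by the largest root of $x^{N+1}=x^{N}+1$), so $h_{top}(Y)>0$ and hence $d_k(a,b)>0$.

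The step I expect to be the main obstacle is exactly the choice of $N$, which is possible precisely when $b<\frac{k-1}{k}$: as $b\uparrow\frac{k-1}{k}$ the safe interval $[b,\frac{k-1}{k})$ lying just below the top cylinder shrinks away, leaving only the single safe cylinder $[\frac{k-1}{k},1)$, whose induced dynamics collapses to the orbit of one point and produces only a countable survivor set. The genuinely delicate part is therefore uniform control of this second generator as one approaches the extreme, together with the junction (suffix) bookkeeping that guarantees no shift of a concatenation ever falls into $(a,b)$. When a hole realizes the boundary $b=\frac{k-1}{k}$ but still has $a\ge\frac1k$, I would instead invoke the second description of $R_1$: the bottom cylinder $[0,\frac1k)$ is then a second fully safe cylinder and the free $2$-shift on $\{(k-1),0\}$ already injects into the survivor set, so positivity persists; the purely extremal slivers with $b=\frac{k-1}{k}$ and $a<\frac1k$ lie in $R_1\cap R_2$ and are most naturally settled by the finer analysis of Section~\ref{R2}.
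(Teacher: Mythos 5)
Your construction is, at its core, the same as the paper's own proof. For $H\subseteq\left[0,\frac{k-1}{k}\right)$ the paper introduces the $\sigma_k$-invariant set $U_N$ of sequences over $\{k-2,k-1\}$ in which every occurrence of $k-2$ is followed by at least $N$ occurrences of $k-1$; this is exactly your set of concatenations of $u=(k-1)$ and $v=(k-2)(k-1)^N$. The paper then chooses $N$ with $b<\frac{k-1}{k}-k^{-N}$ (your condition $k^{-(N+1)}\le\frac{k-1}{k}-b$), verifies survival by the same two cylinder estimates you give, and deduces positive entropy; your entropy step is in fact tidier, since counting concatenations of two blocks of lengths $1$ and $N+1$ gives the growth rate as the largest root of $x^{N+1}=x^N+1>1$ directly, whereas the paper works with the sign-sensitive characteristic polynomial of an adjacency matrix and has to pass from $N$ to $N+1$ when $N$ is odd.

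Where you diverge is at the boundary of $R_1$, and there your extra care is warranted: the paper's proof says ``since $b<\frac{k-1}{k}$'', but $(a,b)\in R_1$ only guarantees $b\le\frac{k-1}{k}$, so the holes with $b=\frac{k-1}{k}$ (and, symmetrically, $a=\frac1k$) are silently dropped. Your treatment of the sub-case $b=\frac{k-1}{k}$, $a\ge\frac1k$ via the full shift on $\{0,k-1\}$ is correct, and your decision to defer the remaining sliver $b=\frac{k-1}{k}$, $a<\frac1k$ is not cosmetic but necessary: for some of those holes the statement of Theorem~\ref{m-2} is actually false, so no argument could close that case. For instance, with $k=3$ the hole $(0,2/3)$ lies in $R_1$, yet a point survives if and only if every iterate lies in $\{0\}\cup[2/3,1)$, i.e.\ if and only if its ternary expansion is $2^m0^\infty$ for some $m\ge 0$; hence $\mathcal{W}_3(0,2/3)$ is countable and $d_3(0,2/3)=0$. (More generally, for $b=\frac{k-1}{k}$, $a<\frac1k$ the survivor set reduces, via the Cantor-set conjugacy of Section~\ref{R2}, to a doubling-map survivor set whose hole ends at $\frac12$, and its dimension may or may not vanish depending on $a$; note that Theorem~\ref{mainthm} as stated also requires $b>\frac{k-1}{k}$ strictly, so these slivers fall through both sections of the paper.) In short, your proof establishes the corrected statement --- $R_1$ read with $b<\frac{k-1}{k}$ or $a>\frac1k$, plus your sub-case $a\ge\frac1k$, $b=\frac{k-1}{k}$ --- and the gap it leaves is a gap in the theorem itself, not in your argument.
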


\begin{theorem}\label{mainthm}
	If $H=(a,b)\in R_2$, the Hausdorff dimension $d_{k}(a,b)>0$ if and only if $g_{k}(b)<\chi(g_{k}(a))$, where the function $\chi$ is described in Section~\ref{subsec:chi}, and $g_k$ is the Cantor function defined in Section~\ref{sec:cantorfunction}. 
\end{theorem}

\section{Background}\label{background}
\subsection{Symbolic Dynamics}
\noindent In this section, we will review few concepts from symbolic dynamics which will be used in this article. We refer to~\cite{BS, Fur67, Kit} for details. \\
For integer $k>1$, let $\Sigma_{k}$ be the set of one-sided sequences with entries from the set $\Lambda_{k}=\{0,1,\cdots,k-1\}$, excluding the sequences ending with $(k-1)^\infty$. For a finite length word $w$ consisting of symbols from $\Lambda_{k}$, we denote its length by $\vert w\vert$. Every such finite word $w$ can be represented as $w0^\infty\in \Sigma_{k}$. Set
\begin{equation}\label{bad_set}
B_{k} =\left\{\frac{\ell}{k^n}\ \vert\ \ell=0,1,\cdots,k^n-1,\ n\in \mathbb{N}\right\}.
\end{equation}
\noindent Let $\sigma_{k}:\Sigma_{k}\rightarrow \Sigma_{k}$ be the one-sided shift map defined as 
\[
\sigma_{k}(a_1,a_2,a_3,\cdots)=(a_2,a_3,\cdots).
\] 
We identify $\Sigma_k$ with the interval $[0,1)$ via the map $\pi_{k}:\Sigma_{k}\rightarrow [0,1)$ defined as
\[
\pi_{k}(a_1,a_2,a_3,\cdots) = \sum_{n=1}^\infty \dfrac{a_n}{k^n},
\]
for $(a_1,a_2,a_3,\cdots)\in\Sigma_k$. \\
The map $\pi_k$ is a bijection, and the inverse image of any element of $B_k$ is a sequence in $\Sigma_k$ ending with $0^\infty$. Representations of real numbers with an arbitrary base $k>1$ were introduced by R\'{e}nyi~\cite{Ren}, and is called its $k$-expansion. Here $\pi_k^{-1}x\in \Sigma_k$ gives the $k$-expansion of $x\in [0,1)$. Note that the points (in $B_k$) have two $k$-expansions, one ending with $0^\infty$, and other ending with $(k-1)^\infty$. Further the diagram given below commutes:
\begin{eqnarray}\label{maincom}
\begin{CD}
\Sigma_{k} @>\sigma_{k}>> \Sigma_{k}\\
@VV\pi_{k} V@VV \pi_{k} V\\
[0,1) @>T_{k}>> [0,1).
\end{CD}
\end{eqnarray}
That is, $T_{k}\pi_{k}=\pi_{k}\sigma_{k}$, for all $k\geq 2$.\\
\noindent A partial order $\prec$ can be defined on $\Sigma_{k}$ as follows: $u\prec v$ if and only if either $u_1<v_1$, or there exists $\ell\geq 2$ such that $u_i=v_i$, for $i=1,\cdots,\ell-1$, and $u_{\ell}<v_\ell$. For $u,v\in\Sigma_k$, we denote the set of all sequences $w\in\Sigma_k$ such that $u\prec w$ and $w\prec v$, including $u$ and $v$, by $[u,v]$, which is called an interval. 

\begin{lemma}
If $x,y\in [0,1)\setminus B_k$ with $x<y$ then $\pi_{k}^{-1}x\prec \pi_{k}^{-1}y$.
\end{lemma}

%



	\subsection{The function $\chi$}\label{subsec:chi}
	The function $\chi$ given in Theorems~\ref{asym} and~\ref{mainthm} was introduced in~\cite{GS2}. The definition of this function requires a few notations which we will explain now.
	
\begin{itemize}
\item Let $r=p/q\in (0,1/2)$ has continued fraction expansion $[d_1+1,d_2,\dots,d_n]$. Since $r<1/2$, $d_1\geq 1$ and $d_n\geq 2$. Define the binary sequence given by $r$ as follows:
\[
u_{-1}=1,\ u_0=0,\ u_{k+1}=u_k^{d_{k+1}}u_{k-1},\ 0\leq k\leq n-1.
\] 
The word $u_n$ is called the \emph{$n^{th}$ standard word given by $r$} with length $q$. 
\item For an irrational $\gamma\in (0,1/2)$ having continued fraction expansion $[d_1+1,d_2,\dots,]$. Let $u_n$ be as defined above. The limit $u_{\infty}$ of $u_n$ as $n\rightarrow \infty$ is called the \emph{characteristic word given by $\gamma$}. 
\item For $r=p/q\in (0,1/2)$, $\rho_r$ defined on the symbols $0$ and $1$ is given by
\[
\rho_r(0)=01w_1\dots w_{q-2},\ \rho_r(1)=10w_1\dots w_{q-2}, 
\]
where the $n^{th}$ standard word given by $r$ is of the form $u_n=w_1\dots w_q$.
\item Let $r\in \mathbb{Q}\cap (1/2,1)$,  
\[
\rho_r(0)=h(\rho_{1-r}(1)),\ \rho_r(1)=h(\rho_{1-r}(0)), 
\]
where $h(0)=1$, $h(1)=0$, and $h(w_1\dots w_n)=h(w_1)\dots h(w_n)$.
\item For any finite vector $(r_1,\dots,r_n)\in (\mathbb{Q}\cap (0,1))^n$, 
\[
\Delta(r_1,\dots,r_n)=[s_n^\infty,s_nt_n^\infty],\ \widetilde{\Delta}(r_1,\dots,r_n)=[s_nt_ns_n^\infty,s_nt_n^\infty],
\]
where 
\[
s_n=\rho_{r_1}\dots \rho_{r_n}(0), \ t_n=\rho_{r_1}\dots \rho_{r_n}(1). 
\]
\item If $r_i=p_i/q_i$, then $Q_n=q_1\dots q_n$. Note that the length of both $s_n$ and $t_n$ is $Q_n$.
\item Let $\mathcal{S}$ be the collection of points in $(1/4,1/2)$ whose binary expansion is of the form $01w$, where $w$ is a characteristic word for some irrational number $\gamma\in (0,1/2)$.
\item For $n\geq 2$, let $\mathcal{S}_n(r_1,\dots,r_{n-1})$ be the collection of points in $(1/4,1/2)$ whose binary expansion is of the form $s_{n-1}t_{n-1}w$, where $w$ is a characteristic word for some irrational number $\gamma\in (0,1/2)$ with $0$ replaced by $s_{n-1}$, and 1 replaced by $t_{n-1}$.
\item For an infinite vector $(r_1,r_2,\dots)\in (\mathbb{Q}\cap (0,1))^\mathbb{N}$,
\[
s(r)=\lim_{n\rightarrow\infty} s_n,\ t(r)=\lim_{n\rightarrow\infty} t_n.
\]
\end{itemize}

\begin{theorem} 
\cite[Theorem 2.13]{GS2} A real number $a\in (1/4,1/2)$ falls into one of the following four categories:
\begin{enumerate}
\item For all $a\in [s_n^\infty,s_nt_ns_n^\infty]$,  $\chi(a)=t_ns_n^\infty$. Furthermore, for any $a<s_n^\infty$, $\chi(a)<t_ns_n^\infty$, and for any $a>s_nt_ns_n^\infty$, $\chi(a)>t_ns_n^\infty$.
\item For all $a\in\mathcal{S}$, $\chi(a)=a+1/4$.
\item For $n\geq 2$ and for all $a\in\mathcal{S}_n(r_1,\dots,r_{n-1})$, 
\[
\chi(a)=a+(1-2^{-Q_{n-1}})(t_{n-1}-s_{n-1}).
\]
\item If there exists $(r_1,r_2,\dots)\in (\mathbb{Q}\cap (0,1))^\mathbb{N}$ such that for all $n\geq 1$, $a\in \widetilde{\Delta}(r_1,\dots,r_n)=[s_nt_ns_n^\infty,s_nt_n^\infty]$, then $a=s(r)$ and $\chi(a)=t(r)$.
\end{enumerate}
\end{theorem}

\begin{theorem} 
\cite[Proposition 3.3]{GS2} For all $a\in (1/4,1/2)$,
\[
a+1-2a_*\leq \chi(a) \leq a+1/4,
\]
where $a_*=\lim_{n\rightarrow\infty} \rho^n_{1/2}(0)$ is the Thue-Morse constant. Furthermore, both of these bounds are sharp. The lower one is attained at $a=a_*$ (that is, when $r=(1/2,1/2,\dots)$, while the upper one is attained when $a\in\mathcal{S}$.
\end{theorem}

\section{Results for $(a,b)\in R_1$} \label{R1}
\noindent In this section, we prove Theorem~\ref{m-2} which states that for $k\geq 3$ and $H=(a,b)$, the Hausdorff dimension $d_k(H)$ is positive when either $H\subseteq \left[0,\dfrac{k-1}{k}\right]$, or $H\subseteq \left[\dfrac{1}{k},1 \right]$. Further, Theorem~\ref{lb} gives an explicit lower bound for $d_k(H)$ when $H\subseteq \left[0,\dfrac{k-j}{k}\right]$ or $H\subseteq \left[\dfrac{j}{k},1\right]$, $j=2,\cdots,m$.
%


\begin{lemma}\label{invert}
$x\in \mathcal{W}_{k}(a,b)$ if and only if $1-x\in \mathcal{W}_{k}(1-b,1-a)$. Thus, $d_{k}(a,b)=d_{k}(1-b,1-a)$. 
\end{lemma}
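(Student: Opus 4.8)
I want to prove the symmetry statement: $x \in \mathcal{W}_k(a,b)$ iff $1-x \in \mathcal{W}_k(1-b,1-a)$. The natural approach is to exploit the map $\tau_k$ defined just above the lemma, which flips each digit of a $k$-expansion, together with the digit-reversal map $\eta(x)=1-x$ on the interval. The key identity I expect to establish first is that $\tau_k$ is conjugate to $\eta$ via the coding map $\pi_k$, i.e. that $\pi_k \circ \tau_k = \eta \circ \pi_k$ (at least off the ambiguous set $B_k$). This is a routine digit computation: if $x = \sum_n a_n/k^n$ then flipping every digit gives $\sum_n (k-1-a_n)/k^n = (k-1)\sum_n k^{-n} - x = 1 - x$.

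**Key steps.** First I would record the conjugacy $\pi_k \tau_k = \eta \pi_k$, being careful about the countable set $B_k$ where a point has two codings; I would note that $\eta$ maps $B_k$ to itself and handle those points either by a direct check or by observing that membership in the survivor set is determined by the coding. Second, I would verify that $\tau_k$ commutes with the shift $\sigma_k$, which is immediate since $\tau_k$ acts on each coordinate independently: $\sigma_k \tau_k = \tau_k \sigma_k$. Combining this with the commuting diagram \eqref{maincom} gives $T_k \eta = \eta T_k$, so $\eta$ intertwines the dynamics. Third, I would translate the defining condition of the survivor set: $x \in \mathcal{W}_k(a,b)$ means $T_k^n x \notin (a,b)$ for all $n \geq 0$. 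Applying $\eta$ and using the intertwining relation, $T_k^n(1-x) = 1 - T_k^n x$, so $T_k^n x \notin (a,b)$ is equivalent to $1 - T_k^n x \notin (1-b, 1-a)$, i.e. $T_k^n(1-x) \notin (1-b,1-a)$. Running over all $n$ gives exactly $1-x \in \mathcal{W}_k(1-b,1-a)$. The dimension equality $d_k(a,b) = d_k(1-b,1-a)$ then follows because $\eta$ is a bi-Lipschitz (indeed isometric) bijection between the two survivor sets, and Hausdorff dimension is a bi-Lipschitz invariant.

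**Main obstacle.** The only genuinely delicate point is the behavior at the boundary set $B_k$, where $\pi_k$ is not injective and the digit-flip $\tau_k$ interacts awkwardly with the excluded tails $(k-1)^\infty$; I would resolve this by arguing that the interior condition $T_k^n x \notin (a,b)$ is insensitive to the choice of coding, or equivalently by noting that $B_k$ is countable and hence contributes nothing to the Hausdorff dimension, so the bijection can be checked on the full-measure complement $[0,1)\setminus B_k$ where the clean conjugacy $\pi_k \tau_k = \eta \pi_k$ holds. Everything else is a short formal computation with the intertwining relation $T_k^n(1-x) = 1 - T_k^n x$.
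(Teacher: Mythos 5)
Your proposal is correct and is essentially the paper's proof: the paper's argument is precisely your third step, the chain of equivalences $T_{k}^n x\notin(a,b) \iff 1-T_{k}^n x\notin(1-b,1-a) \iff T_{k}^n(1-x)\notin(1-b,1-a)$, with the intertwining identity $T_{k}^n(1-x)=1-T_{k}^n x$ taken as self-evident rather than derived through the symbolic conjugacy $\pi_{k}\tau_{k}=\eta\pi_{k}$ that you build. Your extra care over the set $B_{k}$ (where the intertwining genuinely fails, since an orbit landing on $0$ reflects to $1$) addresses a subtlety the paper silently skips; it is harmless in either treatment because neither $0$ nor $1$ can lie in an open hole $(1-b,1-a)\subseteq[0,1]$.
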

\begin{proof}
	Observe the following equalities:
\begin{eqnarray*}
x\in \mathcal{W}_{k}(a,b) &\iff& T_{k}^n(x)\notin (a,b),\ \forall n\geq 0 \\
&\iff& 1-T_{k}^n(x) \notin (1-b,1-a),\ \forall n\geq 0 \\
&\iff& T_{k}^n(1-x)\notin (1-b,1-a),\ \forall n\geq 0\\
&\iff& 1-x\in \mathcal{W}_{k}(1-b,1-a).
\end{eqnarray*}
\end{proof}


\begin{proof}(Proof of Theorem~\ref{m-2})
Consider first the case when $H\subseteq \left[0,\dfrac{k-1}{k}\right)$. The other case is similar. For $n\geq 1$, define 
\[
U_n=\{(w_i)\in \{k -2,k -1\}^{\mathbb{N}}\cap \Sigma_k \ \vert \ w_\ell=k-2 \Rightarrow w_{\ell+j}=k-1,\ j=1,\cdots,n\}.
\]
Then $U_n$ is a $\sigma_{k}$-invariant set. We will prove the result in two steps:\\

\noindent We first prove that there exists $N\geq 1$ such that $\pi_{k}(U_N)\subseteq \mathcal{W}_{k}(H)$. Since $b<\frac{k-1}{k}$, there exists $N\in \mathbb{N}$ such that $b<\frac{k-1}{k}-\frac{1}{k^N}$. For $w=(w_i)\in U_n$, either 
\[
\pi_{k}(w)\geq \frac{k-1}{k}, \ \text{or}
\] 
\[
\pi_{k}(w)\geq \frac{k-2}{k}+\frac{k-1}{k^2}\left(1+\frac{1}{k}+\cdots+\frac{1}{k^{N-1}}\right)=\frac{k-2}{k}+\frac{k^N-1}{k^{N+1}}.
\]
Hence $\pi_{k}(w)\in \mathcal{W}_{k}(H)$, for all $w\in U_N$.\\~\\
From standard arguments using adjacency matrix of a suitable graph and Perron-Frobenius theorem, one can prove that the topological entropy of $U_N$ is positive, and hence the Hausdorff dimension of $\pi_k(U_N)$ is positive. Since $\pi_{k}(U_N)\subseteq \mathcal{W}_{k}(H)$, we have $d_k(H)>0$.\\ 
Further for $H\subseteq \left[\dfrac{1}{k},1\right)$, apply Lemma~\ref{invert}.
\end{proof}

\begin{theorem}~\label{lb}
For $j=2,\cdots,m$, if $H\subseteq \left[0,\dfrac{k-j}{k}\right)$ or $H\subseteq \left[\dfrac{j}{k},1\right)$, then $d_{k}(H)\geq\log_k j$.
\end{theorem}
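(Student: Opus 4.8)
The plan is to prove the statement directly for holes $H=(a,b)\subseteq\left[0,\dfrac{k-j}{k}\right)$ and to recover the case $H\subseteq\left[\dfrac{j}{k},1\right)$ from the reflection symmetry in Lemma~\ref{invert}. Conceptually, Theorem~\ref{m-2} is the degenerate case $j=1$: there the only admissible ``large'' symbol is $k-1$, the full shift on it has zero entropy, and one is forced into the more delicate $U_n$ construction. Once $j\geq 2$ there are at least two large symbols available, and a plain full shift on them already yields both positivity and the explicit bound $\log j/\log k$.

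Concretely, I would consider the full shift on the $j$ largest symbols,
\[
A=\left\{(w_i)\in\Sigma_k\ \vert\ w_i\in\{k-j,k-j+1,\ldots,k-1\}\ \text{for all}\ i\right\},
\]
which is $\sigma_k$-invariant (deleting a first coordinate $\geq k-j$ leaves all coordinates $\geq k-j$, and cannot create a $(k-1)^\infty$ tail). The key step is to check the inclusion $\pi_k(A)\subseteq\mathcal{W}_k(H)$. For $w\in A$ the leading symbol satisfies $w_1\geq k-j$, so
\[
\pi_k(w)=\sum_{n=1}^\infty\frac{w_n}{k^n}\geq\frac{k-j}{k}\geq b,
\]
the last inequality holding because $H\subseteq\left[0,\dfrac{k-j}{k}\right)$ forces $b\leq\dfrac{k-j}{k}$. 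Since $A$ is shift-invariant, every $\sigma_k^n w\in A$, and the commuting diagram~\Ref{maincom} gives $T_k^n\pi_k(w)=\pi_k(\sigma_k^n w)\geq b$; hence no iterate of $\pi_k(w)$ lands in the open interval $(a,b)$, so $\pi_k(w)\in\mathcal{W}_k(H)$.

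To finish, I would compute the entropy of $A$. Every length-$n$ word over the alphabet $\{k-j,\ldots,k-1\}$ appears as a prefix of some element of $A$, so $\mu(A,n)=j^n$ and $h_{top}(A)=\log j$; discarding the sequences ending in $(k-1)^\infty$ removes only countably many points and does not alter this count. Theorem~\ref{Fur} then gives $D(\pi_k(A))=\log j/\log k$, and since $\pi_k(A)\subseteq\mathcal{W}_k(H)$, monotonicity of Hausdorff dimension yields $d_k(H)\geq\log j/\log k$. For a hole $H=(a,b)\subseteq\left[\dfrac{j}{k},1\right)$, its reflection $(1-b,1-a)$ has right endpoint $1-a\leq\dfrac{k-j}{k}$, hence lies in $\left[0,\dfrac{k-j}{k}\right)$, so the first case applies to it and Lemma~\ref{invert} transfers the bound back to $H$. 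I do not anticipate a serious obstacle; the only points needing care are the boundary value $b=\dfrac{k-j}{k}$ (covered because $b\leq\dfrac{k-j}{k}$ already suffices) and the bookkeeping of the excluded $(k-1)^\infty$ tails, neither of which affects the entropy or the dimension.
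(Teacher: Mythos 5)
Your proposal is correct and follows essentially the same route as the paper: the paper also takes the full shift $\{k-j,\cdots,k-1\}^{\mathbb{N}}\cap\Sigma_k$, notes (as ``easy to check'') the inclusion of its $\pi_k$-image in $\mathcal{W}_k(H)$, computes the entropy $\log j$, applies Theorem~\ref{Fur}, and handles $H\subseteq\left[\dfrac{j}{k},1\right)$ via Lemma~\ref{invert}. Your write-up merely makes explicit the verification steps (the bound $\pi_k(w)\geq\frac{k-j}{k}\geq b$, shift-invariance, and the harmlessness of excluding $(k-1)^\infty$ tails) that the paper leaves to the reader.
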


\begin{proof}
If $H\subseteq \left[0,\dfrac{k-j}{k}\right)$, it is easy to check that $\pi_{k}(\{k-j,\cdots,k-1\}^\mathbb{N}\cap\Sigma_k)\subseteq \mathcal{W}_{k}(H)$. Thus, 
\begin{eqnarray*}
d_{k}(H) &\geq& d_{k}(\pi_{k}(\{k-j,\cdots,k-1\}^\mathbb{N}\cap\Sigma_k)) \\
&=&  \dfrac{h_{top}(\{k-j,\cdots,k-1\}^\mathbb{N})}{\log k} = \dfrac{\log j}{\log k}.
\end{eqnarray*}
For $H\subseteq \left[\dfrac{j}{k},1\right)$, apply Lemma~\ref{invert}.
\end{proof}


\noindent The remaining types of holes $H$ belong to the region $R_2$. This will be considered in Section~\ref{R2}. We will discuss preliminaries required for the main result Theorem~\ref{mainthm} for this remaining set of holes $(a,b)$, where $a<\dfrac{1}{k}$ and $b>\dfrac{k-1}{k}$, in Section~\ref{cantor}. For such a hole $(a,b)$, the Hausdorff dimension $d_k(a,b)$ of $\mathcal{W}_{k}(a,b)$ relates to the Hausdorff dimension $d_2(a,b)$ of $\mathcal{W}_2(a,b)$ (doubling map).

\section{The remaining holes $(a,b)\in R_2$}
\label{R2}

\noindent In this section, we will focus on the remaining holes, that is, when $a\leq \dfrac{1}{k}$ and $b\geq\dfrac{k-1}{k}$. A straightforward lemma is as follows.

\begin{lemma}
If $a<\dfrac{1}{k^2}, b>\dfrac{k-1}{k}$, or $a<\dfrac{1}{k}, b>\dfrac{k^2-1}{k^2}$, then $\mathcal{W}_k(a,b)=\{0,1\}$.
\end{lemma}
\begin{proof}
Let us consider the case $a<\dfrac{1}{k^2}, b>\dfrac{k-1}{k}$. The other case follows from Lemma~\ref{invert}.\\
Let $x\in \mathcal{W}_k(a,b)$. Since $a<\dfrac{1}{k^2}$ and $b>\dfrac{k-1}{k}$, $\pi_k^{-1}(x)$ cannot contain $0w$, $w\in\{1,\dots,k-1\}$, and $1,\dots,k-2$. Also, if $\pi_k^{-1}(x)$ ends with $(k-1)0^\infty$, then some iterate of $x$ under $T_k$ lies in $(a,b)$, which is a contradiction. Hence $\mathcal{W}_k(a,b)=\{0,1\}$. 
\end{proof}

It turns out that the analysis for holes in $R_2$ is identical to corresponding holes under $g_{k}$ for the doubling map $T_2$, see Theorem~\ref{mainthm}. The analysis for such holes is similar to the doubling map, via the generalized Cantor set and Cantor function, which we will introduce in the following subsections.

\subsection{Generalization of Cantor Set and Cantor Function}
\label{cantor}
\noindent In this section, we define a set $C_{k}$ and a function $g_{k}:[0,1]\rightarrow [0,1]$ which are generalizations of the Cantor set and the Cantor function, respectively. The function $g_{k}$ maps $C_{k}$ onto the unit interval $[0,1]$. 

\subsubsection{The Cantor set $C_{k}$}
Divide the unit interval $[0,1]$ into $k$ equal sub-intervals. Let $I_1$ denote the union of closed intervals $I_{11}=\left[0,\dfrac{1}{k}\right]$ and $I_{12}=\left[\dfrac{k-1}{k},1\right]$ obtained by removing the middle $(k-2)$ sub-intervals. Repeat the same procedure for each of the intervals $I_{11}$ and $I_{12}$ to obtain sub-intervals $I_{21}, I_{22}$ of $I_{11}$ and $I_{23}, I_{24}$ of $I_{12}$. Set $I_2=I_{21}\cup I_{22}\cup \cup I_{23}\cup I_{24}$. Thus, at the $\ell^{th}$ step, we obtain $2^\ell$ intervals each of length $\dfrac{1}{k^\ell}$, whose union is $I_k$, say. Moreover, $(I_\ell)$ is a decreasing sequence of closed sets. We define the generalized Cantor set as
\[
C_{k} =\cap_{\ell\geq 1} I_\ell.
\]
\begin{rem} 
	$C_{k}$ is a closed set with zero Lebesgue measure. The Hausdorff dimension of $C_{k}$ is $\log_{k} 2$. We refer to~\cite{BS} for details.
\end{rem} 

\begin{rem}
	$\pi_{k}(\{0,k-1\}^\mathbb{N}\cap\Sigma_k)=C_{k}$. Thus, by the commuting diagram~\eqref{maincom}, $T_{k}(C_{k})=C_{k}$.
\end{rem} 

\subsubsection{The Cantor function $g_{k}$}\label{sec:cantorfunction}
For $x\in[0,1]$, define 
\[
g_{k}(x)=\sum_{n=1}^{N-1} \dfrac{a_n}{(k-1)2^n}+\dfrac{1}{2^{N}},
\]
where $x=\sum_{n=1}^\infty  \dfrac{a_n}{k^n}$,
and $N\geq 1$ is the least index such that $a_{N}\notin \{0,k-1\}$. If each $a_n\in\{0,k-1\}$, define
\[
g_{k}(x)=\sum_{n=1}^\infty \dfrac{a_n}{(k-1)2^n}.
\]
The function $g_{k}$ is well-defined. Recall that 
\[
B_{k}=\left\lbrace \dfrac{\ell}{k^n}\ \vert \ \ell=1,\cdots,k^n-1, n\in\mathbb{N}\right\rbrace.
\]
Each $x\in [0,1)\setminus B_{k}$ has a unique $k$-expansion, that is, it has a unique pre-image under $\pi_{k}$. \\
If $x\in B_{k}$, then
$x$ has two $m$-expansions given by $x_1$ and $x_2$ below.
\[
x_1=\sum_{n=1}^p \dfrac{a_n}{k^n}+\sum_{n=p+2}^\infty \dfrac{k-1}{k^n},\ \text{and}\ 
x_2=\sum_{n=1}^p \dfrac{a_n}{k^n}+\dfrac{k-1}{k^{p+1}}, 
\]
for some $a_1,\cdots,a_p\in\Lambda_{k}$. \\
If $a_1,\cdots,a_p\in\{0,k-1\}$, then  
\[
g_{k}(x_1)=\sum_{n=1}^p \dfrac{a_n}{(k-1)2^n}+\sum_{n=p+2}^\infty \dfrac{1}{2^n},\ \text{and}\
g_{k}(x_2)=\sum_{n=1}^p \dfrac{a_n}{(k-1)2^n}+\dfrac{1}{2^{p+1}}.
\]
Else, if $1\leq N\leq p$ is the least index such that $a_{N}\notin \{0,k-1\}$, then 
\[
g_{k}(x_1)=g_{k}(x_2)=\sum_{n=1}^{N-1} \dfrac{a_n}{(k-1)2^n}+\dfrac{1}{2^{N}}.
\]
Hence $g_{k}(x_1)=g_{k}(x_2)$ in both the cases. Therefore, $g_{k}$ is well-defined. The following are some properties of the function $g_{k}$.\\~\\
\textbf{Properties of $g_{k}$}:
\begin{enumerate}
	\item $g_{k}$ maps $C_{k}$ onto $[0,1)$ by construction.
	\item $g_{k}$ is an increasing and continuous function.
	\item $g_{k}$ is constant on each interval of the form 
\[
I_{a_1\cdots a_N}=[\pi_{k}(a_1\cdots a_N10^\infty),\pi_{k}(a_1\cdots a_N(k-1)0^\infty)],
\]
where $a_i\in\{0,k-1\}$, for $i=1,\cdots,N$. Note that 
\[
g_{k}(x)=\sum_{n=1}^N \dfrac{a_n}{(k-1)2^n}+\dfrac{1}{2^{N+1}},
\]
for all $x\in I_{a_1\cdots a_N}$. 
	\item $g_{k}$ is differentiable with $g'=0$ on  $[0,1)\setminus C_{k}$.
	\item $g_{k}^{-1}([0,1]\setminus B_2)=C_{k}\setminus B_{k}$, and there is a one-to-one correspondence under $g_{k}$ between $C_{k}\setminus B_{k}$ and $[0,1)\setminus B_2$. 
	\item $g_k$ is a H\"{o}lder continuous function of exponent $s$ provided $s\leq \log_k2$.
\end{enumerate}	

\begin{rems}
The above properties follow from the properties of the standard Cantor function (with $k=3$), see for instance~\cite{HT}.
\end{rems}

\begin{lemma}\label{gtilde}
The map $g_{k}$ induces a map $\widetilde{g_{k}}:\Sigma_{k}\rightarrow \Sigma_2$ such that the following diagram commutes:
\begin{eqnarray*}
	\begin{CD}
	\Sigma_{k} @>\widetilde{g_{k}}>> \Sigma_2\\
	@VV\pi_{k} V @VV \pi_2 V\\
	[0,1) @>g_{k}>> [0,1).
	\end{CD}
	\end{eqnarray*}
	Moreover, 
	\begin{enumerate}
	\item $\widetilde{g_{k}}$ is constant on each interval of the form 
\[
\widetilde{I}_{a_1\cdots a_N}=[a_1\cdots a_N10^\infty, a_1\cdots a_N(k-1)0^\infty],
\]
where $a_i\in\{0,k-1\}$, for $i=1,\cdots,N$. Note that 
\[
\widetilde{g_{k}}(w)=\dfrac{a_1}{k-1}\cdots \dfrac{a_N}{k-1}01^\infty,
\]
for all $w\in \widetilde{I}_{a_1\cdots a_N}$.
	\item $\widetilde{g_{k}}$ is an increasing function with partial order $\prec$.
	\item $\widetilde{g_{k}}$ maps $\{0,k-1\}^\mathbb{N}$ onto $\Sigma_2$ as 
	\[
	\widetilde{g_{k}}(a_1,a_2,\cdots)=\left(\dfrac{a_1}{k-1},\dfrac{a_2}{k-1},\cdots\right).
	\]
\end{enumerate}	
\end{lemma}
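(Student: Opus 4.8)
The plan is to define $\tilde{g_k}$ directly through the two coding maps and then transfer each listed property of $g_k$ across the commuting square. Since $\pi_k$ and $\pi_2$ are bijections onto $[0,1)$ and $g_k$ maps $[0,1)$ into $[0,1)$, I would set
\[
\tilde{g_k} = \pi_2^{-1}\circ g_k\circ \pi_k : \Sigma_k \ra \Sigma_2.
\]
This is well-defined, and the commutativity $\pi_2\tilde{g_k}=g_k\pi_k$ holds by construction. Thus the entire content of the lemma lies in the four enumerated properties, each of which I would obtain by pushing the corresponding property of $g_k$ (from the preceding theorem on properties of $g_k$ and from Corollary~\ref{cons}) through $\pi_k$ and $\pi_2^{-1}$.

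The formula on $\{0,k-1\}^\mathbb{N}$ (item 4) and the surjectivity (item 2) are the most direct. For item 4, if $w=(a_1,a_2,\dots)$ with every $a_n\in\{0,k-1\}$, then $\pi_k(w)\in C_k$ and the series definition of $g_k$ on $C_k$ gives
\[
g_k(\pi_k(w))=\sum_{n=1}^\infty \frac{a_n}{(k-1)2^n}=\pi_2\left(\tfrac{a_1}{k-1},\tfrac{a_2}{k-1},\dots\right),
\]
so $\tilde{g_k}(w)=(a_1/(k-1),a_2/(k-1),\dots)$, which lies in $\Sigma_2$ because $w$ does not end in $(k-1)^\infty$. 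For item 2, since $g_k$ maps $C_k=\pi_k(\{0,k-1\}^\mathbb{N}\cap\Sigma_k)$ onto $[0,1)$ and $\pi_2$ is onto $[0,1)$, the composite $\pi_2^{-1}g_k\pi_k$ is onto $\Sigma_2$; indeed item 4 already exhibits preimages.

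For the constancy on the intervals $\tilde{I}_{a_1\cdots a_N}$ (item 1), I would first verify that $\pi_k(\tilde{I}_{a_1\cdots a_N})=I_{a_1\cdots a_N}$: the symbolic endpoints map under $\pi_k$ to the real endpoints, and $\pi_k$ is order-preserving, so the closed symbolic interval maps onto the closed real interval. Corollary~\ref{cons} then gives that $g_k$ is constant on $I_{a_1\cdots a_N}$ with value $v=\sum_{n=1}^N \frac{a_n}{(k-1)2^n}+\frac{1}{2^{N+1}}$, whence $\tilde{g_k}\equiv \pi_2^{-1}(v)$ on $\tilde{I}_{a_1\cdots a_N}$. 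It then remains only to identify $\pi_2^{-1}(v)$: writing $b_n=a_n/(k-1)\in\{0,1\}$, a short computation shows $v=\pi_2(b_1\cdots b_N 01^\infty)$, which is exactly the stated value.

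The monotonicity in item 3 is where the care is needed, and I expect it to be the main obstacle. The order chain is $\pi_k$ (order-preserving), $g_k$ (increasing), $\pi_2^{-1}$ (order-preserving); the issue is that the stated lemma relating $<$ to $\prec$ is phrased for points outside the countable sets $B_k$ and $B_2$, while $g_k$ is only weakly increasing (constant on the complementary intervals of $C_k$), so strict comparisons can collapse to equalities. I would first record that $\pi_k$ is in fact strictly order-preserving on all of $\Sigma_k$: if $u\prec v$ and $\ell$ is the first index with $u_\ell<v_\ell$, then since $(k-1)^\infty$-tails are excluded, $\pi_k(u)$ is strictly below the common prefix plus $(u_\ell+1)k^{-\ell}\le\pi_k(v)$, giving $\pi_k(u)<\pi_k(v)$; the same argument applies to $\pi_2$. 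Consequently, for $u\prec v$ I get $\pi_k(u)<\pi_k(v)$, hence $g_k(\pi_k(u))\le g_k(\pi_k(v))$ by monotonicity of $g_k$, and applying the (order-preserving) $\pi_2^{-1}$ yields $\tilde{g_k}(u)\prec\tilde{g_k}(v)$ or $\tilde{g_k}(u)=\tilde{g_k}(v)$. This establishes that $\tilde{g_k}$ is (non-strictly) increasing for $\prec$, with the equalities occurring precisely on the intervals $\tilde{I}_{a_1\cdots a_N}$ identified in item 1.
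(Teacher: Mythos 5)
Your proof is correct, and there is nothing in the paper to compare it against: the paper states Lemma~\ref{gtilde} with no proof at all, treating it as immediate from the definitions, so your write-up supplies precisely the missing routine argument. Your construction $\tilde{g_k}=\pi_2^{-1}\circ g_k\circ \pi_k$ is the natural (and surely intended) meaning of ``induces,'' and your transfer of each property of $g_k$ through the coding bijections is sound; in particular you correctly handle the two points that actually require care, namely that $g_k\circ\pi_k$ lands in $[0,1)$ (so $\pi_2^{-1}$ applies) and that $\pi_k$ is \emph{strictly} order-preserving on all of $\Sigma_k$ because $(k-1)^\infty$-tails are excluded, which is what makes both $\pi_k(\tilde{I}_{a_1\cdots a_N})=I_{a_1\cdots a_N}$ and the monotonicity of $\pi_2^{-1}$ work. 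Two small remarks. First, your item 1 already uses this strict order-preservation (for the ``onto'' half of $\pi_k(\tilde{I}_{a_1\cdots a_N})=I_{a_1\cdots a_N}$), but you only prove it inside item 3; reorder so the order-preservation facts come first. Second, with $\pi_2^{-1}$ defined as the inverse of the bijection $\pi_2:\Sigma_2\to[0,1)$, the constant value of $\tilde{g_k}$ on $\tilde{I}_{a_1\cdots a_N}$ is the expansion $\frac{a_1}{k-1}\cdots\frac{a_N}{k-1}10^\infty$, not $\frac{a_1}{k-1}\cdots\frac{a_N}{k-1}01^\infty$: the latter ends in $1^\infty$ and hence is not an element of $\Sigma_2$ as the paper defines it. Both sequences code the same point of $[0,1)$, so this is an inconsistency in the paper's own statement rather than a gap in your argument, but you should state which convention you adopt rather than assert that your computed value ``is exactly the stated value.''
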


\begin{proof} 
The result follows from the definition of $\widetilde{g_k}$ using the commuting diagram.
\end{proof}

\subsection{Results}
\begin{lemma}\label{B} 
For the doubling map $T_2$, $a<\dfrac{1}{2}$ and $b>\dfrac{1}{2}$, $\mathcal{W}_2(a,b)\cap B_2=\emptyset$, where $B_2$ is the set of dyadic rationals in $[0,1)$, see~\eqref{bad_set}.
\end{lemma}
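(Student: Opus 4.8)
The plan is to show that no dyadic rational in $[0,1)$ can survive forever under $T_2$ when the hole $(a,b)$ straddles $\tfrac12$, i.e.\ $a<\tfrac12<b$. The key observation is that every dyadic rational, under iteration of the doubling map, eventually lands exactly on $0$, and more to the point, its orbit must pass through the central region near $\tfrac12$. First I would recall that $B_2=\{\ell/2^n\}$ consists precisely of those $x\in[0,1)$ whose binary expansion terminates, i.e.\ whose $\pi_2$-preimage ends in $0^\infty$. For such an $x$ the forward orbit $\{T_2^n x\}_{n\geq 0}$ is a finite set, since after finitely many doublings the fractional part becomes $0$, which is then fixed.

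The heart of the argument is to show this finite orbit must intersect the open hole $(a,b)$. Since $0<b$ and $a<\tfrac12$, and in particular $0\in(a,b)$ whenever $a<0$ is impossible but $a\geq 0$; the cleanest route is to use the endpoint $0$ itself. Observe that $0=T_2^n x$ for some $n$ once the expansion terminates; if $a>0$ then $0\notin(a,b)$, so landing on $0$ alone does not immediately finish the argument. Instead I would track the \emph{last nonzero digit}: if $x=\sum_{i=1}^{p}a_i/2^i$ with $a_p=1$ (the terminating expansion), then $T_2^{p-1}x=\tfrac12$, because shifting the binary string left by $p-1$ places leaves the single digit $1$ in the first position followed by $0^\infty$, and $\pi_2(10^\infty)=\tfrac12$. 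Since $a<\tfrac12<b$, we have $\tfrac12\in(a,b)$, so $T_2^{p-1}x\in(a,b)$, and therefore $x\notin\mathcal{W}_2(a,b)$.

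The main steps in order are: (i) characterize $B_2$ as the points with terminating binary expansion, via the bijection $\pi_2$; (ii) for a nonzero dyadic $x$ identify the unique terminating expansion $a_1\cdots a_{p-1}10^\infty$ with final digit $1$ at position $p$; (iii) apply the commuting relation $T_2\pi_2=\pi_2\sigma_2$ to compute $T_2^{p-1}x=\pi_2(\sigma_2^{p-1}(a_1\cdots a_{p-1}10^\infty))=\pi_2(10^\infty)=\tfrac12$; (iv) conclude $T_2^{p-1}x\in(a,b)$ from $a<\tfrac12<b$, so $x$ is excluded from the survivor set. The degenerate case $x=0$ is handled separately: if $0\in(a,b)$ we are done immediately, and otherwise $a=0$, but then $\tfrac12\in(0,b)=(a,b)$ and any other dyadic still gets caught by (iii)--(iv), while $0$ itself is fixed at $0=a$, an endpoint; since the hole is open and survivor sets are defined by avoiding the open hole, one checks $0$ does not survive once we note the statement concerns strict membership.

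The step I expect to be the main obstacle is the careful bookkeeping around endpoints and the two possible binary expansions of a dyadic rational. A dyadic $x$ has both a terminating expansion ending in $0^\infty$ and a nonterminating one ending in $1^\infty$; I must use the terminating one (which is the canonical element of $\Sigma_2$) and verify that the shift computation in (iii) genuinely produces $\tfrac12=\pi_2(10^\infty)$ rather than accidentally landing on $\tfrac12$'s other representation $01^\infty$. The boundary cases $a=0$ or $b=1$, and the point $x=0$, require separate verification to ensure the orbit still meets the \emph{open} interval $(a,b)$; this is where one must be most careful, since hitting an endpoint of the hole does not count as falling into it. Once the observation that every dyadic orbit passes through $\tfrac12\in(a,b)$ is made precise, the conclusion $\mathcal{W}_2(a,b)\cap B_2=\emptyset$ follows directly.
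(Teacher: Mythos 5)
Your core argument is exactly the paper's proof: for a nonzero dyadic rational $x$, take the terminating binary expansion $w10^\infty$, apply the commuting relation to get $T_2^{|w|}x=\pi_2(10^\infty)=\tfrac12$, and since $a<\tfrac12<b$ the orbit enters the open hole, so $x\notin\mathcal{W}_2(a,b)$. Steps (i)--(iv) are correct and coincide with the paper's (one-line) proof, including the choice of the terminating representative in $\Sigma_2$.

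The genuine problem is your treatment of $x=0$, where your conclusion is false. The point $0$ is a fixed point of $T_2$, and since $a\geq 0$ and the hole is \emph{open}, $0\notin(a,b)$; hence the orbit of $0$, namely $\{0\}$, never meets the hole and $0\in\mathcal{W}_2(a,b)$. Your claim that ``one checks $0$ does not survive once we note the statement concerns strict membership'' gets this exactly backwards: strict membership is precisely why $0$ \emph{does} survive, even when $a=0$. No dynamical argument can rescue this case; the resolution is definitional. The set $B_2$ in the lemma must be read as the \emph{nonzero} dyadic rationals --- note that the paper is inconsistent on this point: Equation~(\ref{bad_set}) allows $\ell=0$, whereas the restatement of $B_k$ in Subsection 4.1 (just before the lemma) takes $\ell\geq 1$, and it is the latter that makes the lemma true. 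The paper's own proof, like the effective part of yours, silently handles only those $x$ whose expansion contains a $1$. So: keep your steps (i)--(iv), delete the $x=0$ paragraph, and instead record that $0\notin B_2$ under the convention in force here.
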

\begin{proof}
	Let $x\in B_2$, then $u=\pi_2^{-1}x=w01^\infty=w10^\infty$, where $w$ is a finite word consisting of symbols $0,1$. Then $\sigma^{\vert w\vert}u=01^\infty$. Hence $T_2^{\vert w\vert}x\in(a,b)$. Thus, $x\notin \mathcal{W}_2(a,b)$. 
\end{proof}

\begin{lemma}\label{lem}
	The following diagram commutes:
	\begin{eqnarray*}
	\begin{CD}
	C_{k} @>T_{k}>> C_{k}\\
	@VVg_{k} V @VV g_{k} V\\
	[0,1) @>T_2>> [0,1).
	\end{CD}
	\end{eqnarray*}
\end{lemma}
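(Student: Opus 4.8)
The plan is to verify that the diagram
\[
\begin{CD}
C_{k} @>T_{k}>> C_{k}\\
@VVg_{k} V @VV g_{k} V\\
[0,1) @>T_2>> [0,1)
\end{CD}
\]
commutes by checking the identity $g_{k}(T_{k}(x)) = T_2(g_{k}(x))$ for every $x \in C_{k}$. The natural route is to pass to symbolic dynamics, exploiting the two commuting squares already established in the excerpt: diagram~\Ref{maincom} gives $T_{k}\pi_{k} = \pi_{k}\sigma_{k}$, and Lemma~\ref{gtilde} gives $g_{k}\pi_{k} = \pi_2\tilde{g_{k}}$ together with the explicit action of $\tilde{g_{k}}$ on $\{0,k-1\}^{\mathbb{N}}$ from part~(4). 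The remark preceding the Cantor function shows $\pi_{k}(\{0,k-1\}^{\mathbb{N}}\cap\Sigma_k) = C_{k}$, so every point of $C_{k}$ is $\pi_{k}(w)$ for some $w = (a_1,a_2,\dots) \in \{0,k-1\}^{\mathbb{N}}$.

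First I would fix such a $w$ and set $x = \pi_{k}(w) \in C_{k}$. Chasing the left-hand and bottom edges, $g_{k}(x) = g_{k}(\pi_{k}(w)) = \pi_2(\tilde{g_{k}}(w)) = \pi_2\bigl(\tfrac{a_1}{k-1},\tfrac{a_2}{k-1},\dots\bigr)$ by Lemma~\ref{gtilde}(4), and then $T_2(g_{k}(x)) = \pi_2\bigl(\sigma_2(\tfrac{a_1}{k-1},\tfrac{a_2}{k-1},\dots)\bigr) = \pi_2\bigl(\tfrac{a_2}{k-1},\tfrac{a_3}{k-1},\dots\bigr)$ using diagram~\Ref{maincom} for the base~$2$. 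Chasing the top and right edges instead, $T_{k}(x) = T_{k}(\pi_{k}(w)) = \pi_{k}(\sigma_{k}(w)) = \pi_{k}(a_2,a_3,\dots)$, which lies in $C_{k}$ since $(a_2,a_3,\dots) \in \{0,k-1\}^{\mathbb{N}}$, and then $g_{k}(T_{k}(x)) = \pi_2(\tilde{g_{k}}(a_2,a_3,\dots)) = \pi_2\bigl(\tfrac{a_2}{k-1},\tfrac{a_3}{k-1},\dots\bigr)$ by another application of Lemma~\ref{gtilde}(4). The two expressions agree, giving the desired commutativity. I would also note that $T_{k}(C_{k}) = C_{k}$ (the remark following the definition of $C_{k}$) so the top arrow is legitimate.

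The one point that requires genuine care, rather than routine chasing, is the ambiguity of $k$-expansions at the boundary points $B_{k} \cap C_{k}$: a point such as $\pi_{k}(a_1\cdots a_p (k-1)0^\infty)$ also equals $\pi_{k}(a_1\cdots a_p 0 (k-1)^\infty)$, and $T_{k}$ acts on the \emph{real number}, not on a chosen symbolic representative. The symbolic computation above picks the $0^\infty$-tail representative, and I would check that the well-definedness of $g_{k}$ already established in the paragraph preceding Corollary~\ref{cons}—namely $g_{k}(x_1) = g_{k}(x_2)$ for the two $k$-expansions of a point of $B_{k}$—guarantees that the value $g_{k}(T_{k}(x))$ does not depend on which representative of $T_{k}(x)$ is used. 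Since every endpoint of a Cantor-construction interval lies in $B_{k}$, this is exactly where a careless argument could slip; once one confirms that $T_{k}$ maps $\{0,k-1\}^{\mathbb{N}}$-points to $\{0,k-1\}^{\mathbb{N}}$-points and that $g_{k}$ agrees on the two expansions, the commutativity holds for \emph{all} of $C_{k}$, including $B_{k}\cap C_{k}$.
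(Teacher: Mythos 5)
The paper states Lemma~\ref{lem} with no proof whatsoever, so there is no argument of the author's to compare against; judged on its own terms, your core computation is correct and is surely the intended one. For any $w=(a_1,a_2,\dots)\in\{0,k-1\}^{\mathbb{N}}\cap\Sigma_k$ the chase is airtight: diagram~(\ref{maincom}) applies to $w$ and to $\sigma_k w$ (both lie in $\Sigma_k$), Lemma~\ref{gtilde}(4) applies to both, and the image sequence $\bigl(\tfrac{a_1}{k-1},\tfrac{a_2}{k-1},\dots\bigr)$ lies in $\Sigma_2$ because $w$ does not end in $(k-1)^\infty$, so diagram~(\ref{maincom}) in base $2$ is also legitimate; both compositions equal $\pi_2\bigl(\tfrac{a_2}{k-1},\tfrac{a_3}{k-1},\dots\bigr)$.

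The problems are in your third paragraph, in two respects. First, the expansion identity you cite is the $k=2$ identity: for $k\geq 3$ the two $k$-expansions of a point of $B_k$ pair as $\pi_k(a_1\cdots a_p10^\infty)=\pi_k(a_1\cdots a_p0(k-1)^\infty)$ and $\pi_k(a_1\cdots a_p(k-1)0^\infty)=\pi_k(a_1\cdots a_p(k-2)(k-1)^\infty)$; the identity you wrote, $\pi_k(a_1\cdots a_p(k-1)0^\infty)=\pi_k(a_1\cdots a_p0(k-1)^\infty)$, is false for $k\geq 3$ (the paper itself makes essentially the same slip in its well-definedness discussion of $g_k$, where the displayed $x_1$ and $x_2$ are not actually equal unless $k=2$). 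Second, and more substantively, the delicate points are not disposed of by the fix you propose. The genuinely problematic points are the \emph{right} endpoints of the construction intervals, e.g.\ $1/k\in C_k$: such a point has \emph{no} representative in $\{0,k-1\}^{\mathbb{N}}\cap\Sigma_k$ at all, since its unique $\Sigma_k$-expansion is $10^\infty$ (to which Lemma~\ref{gtilde}(4) does not apply, as it contains the symbol $1$), while its $\{0,k-1\}$-expansion is $0(k-1)^\infty\notin\Sigma_k$ (to which diagram~(\ref{maincom}) does not apply; indeed it fails there, since the shifted series for $(k-1)^\infty$ sums to $1$ whereas $T_k(1/k)=0$, so a naive chase on this representative gives $1$ on one side and $0$ on the other). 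Consequently, ``$T_k$ preserves $\{0,k-1\}^{\mathbb{N}}$ plus well-definedness of $g_k$'' does not close the argument at these countably many points; incidentally, the remark you quote, $\pi_k(\{0,k-1\}^{\mathbb{N}}\cap\Sigma_k)=C_k$, is itself accurate only up to these endpoints. What does close it is to take the $\Sigma_k$-representative $a_1\cdots a_p10^\infty$ (with $a_i\in\{0,k-1\}$) and evaluate $g_k$ on both sides via Corollary~\ref{cons}: one finds $g_k(T_k(x))=\sum_{n=2}^{p}\frac{a_n}{(k-1)2^{n-1}}+\frac{1}{2^{p}}=T_2(g_k(x))$, the mod-$1$ reduction upstairs matching the mod-$1$ reduction downstairs. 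The omission is harmless for the dimension statement this lemma feeds into (countable sets have Hausdorff dimension zero), but as a proof of the lemma as stated, your argument is incomplete at exactly the points you flagged.
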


\begin{proof} 
The proof is straightforward.
\end{proof}

\begin{lemma}\label{mainlemma}
	For $a<\dfrac{1}{k}$ and $b>\dfrac{k-1}{k}$, 
	\begin{enumerate}
	\item[a)] $\mathcal{W}_{k}(a,b) \subseteq C_{k}$,
	\item[b)] $g_{k}^{-1}\mathcal{W}_2(g_{k}(a),g_{k}(b))\cap B_{k}=\emptyset$, 
	\item[c)] $\mathcal{W}_{k}(a,b)= C_{k}\cap g_{k}^{-1}\mathcal{W}_2(g_{k}(a),g_{k}(b))$, and
	\item[d)] $\pi_2^{-1}\mathcal{W}_{2}(g_{k}(a),g_{k}(b))= \{w/(k-1) \ \vert\ w\in \pi_k^{-1}\mathcal{W}_{k}(a,b)\}$.
	\end{enumerate}
\end{lemma}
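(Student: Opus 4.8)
I would prove the three parts in the order stated, since each feeds the next: (a) confines the survivor set to $C_{k}$, (b) transports across the semiconjugacy the fact (Lemma~\ref{B}) that doubling-map holes miss the dyadic rationals, and (c) glues these together via the commuting diagram of Lemma~\ref{lem}. For (a) I argue digit-by-digit on the $k$-expansion. Because $a<\frac{1}{k}$ and $b>\frac{k-1}{k}$, the open hole $(a,b)$ contains the central block $\left[\frac{1}{k},\frac{k-1}{k}\right]$. For any $x$ and any $n\geq 0$ the point $T_{k}^{n}x$ lies in the cylinder $\left[\frac{j}{k},\frac{j+1}{k}\right)$, where $j=\lfloor k\,T_{k}^{n}x\rfloor$ is the first digit of its $k$-expansion; if $j\in\{1,\dots,k-2\}$ then $T_{k}^{n}x\in\left[\frac{1}{k},\frac{k-1}{k}\right)\subseteq(a,b)$, so $x\notin\mathcal{W}_{k}(a,b)$. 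Hence every $x\in\mathcal{W}_{k}(a,b)$ has all $k$-digits in $\{0,k-1\}$, i.e. $x\in\pi_{k}(\{0,k-1\}^{\mathbb{N}}\cap\Sigma_{k})=C_{k}$.

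For (b) the key preliminary is the pair of strict inequalities $g_{k}(a)<\frac{1}{2}<g_{k}(b)$. Since $g_{k}$ is continuous and increasing with $g_{k}\!\left(\frac{1}{k}\right)=g_{k}\!\left(\frac{k-1}{k}\right)=\frac{1}{2}$, and since Corollary~\ref{cons} shows that the central gap $\left(\frac{1}{k},\frac{k-1}{k}\right)$ is the only gap of $C_{k}$ on which $g_{k}$ takes the value $\frac{1}{2}$, the level set $\{g_{k}=\frac{1}{2}\}$ is exactly $\left[\frac{1}{k},\frac{k-1}{k}\right]$; as $a<\frac{1}{k}$ and $b>\frac{k-1}{k}$, the inequalities follow. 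Now Lemma~\ref{B} applies to the doubling hole $(g_{k}(a),g_{k}(b))$ and gives $\mathcal{W}_{2}(g_{k}(a),g_{k}(b))\cap B_{2}=\emptyset$, so $\mathcal{W}_{2}(g_{k}(a),g_{k}(b))\subseteq[0,1]\setminus B_{2}$. Taking $g_{k}$-preimages and using property (5) of $g_{k}$, namely $g_{k}^{-1}([0,1]\setminus B_{2})=C_{k}\setminus B_{k}$, yields $g_{k}^{-1}\mathcal{W}_{2}(g_{k}(a),g_{k}(b))\subseteq C_{k}\setminus B_{k}$, which is disjoint from $B_{k}$.

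For the forward inclusion of (c), take $x\in\mathcal{W}_{k}(a,b)$. By (a) we have $x\in C_{k}$, and since $T_{k}(C_{k})=C_{k}$ every iterate $T_{k}^{n}x$ stays in $C_{k}$; Lemma~\ref{lem} then gives $T_{2}^{n}g_{k}(x)=g_{k}(T_{k}^{n}x)$. As $T_{k}^{n}x\notin(a,b)$ we have $T_{k}^{n}x\leq a$ or $T_{k}^{n}x\geq b$, and monotonicity of $g_{k}$ forces $g_{k}(T_{k}^{n}x)\notin(g_{k}(a),g_{k}(b))$; hence $g_{k}(x)\in\mathcal{W}_{2}(g_{k}(a),g_{k}(b))$, so $x\in C_{k}\cap g_{k}^{-1}\mathcal{W}_{2}(g_{k}(a),g_{k}(b))$.

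The reverse inclusion of (c) is where I expect the real difficulty, precisely because $g_{k}$ is only weakly increasing (constant on the gaps of $C_{k}$), so a priori an iterate $T_{k}^{n}x$ could land inside $(a,b)$ while its image stays outside $(g_{k}(a),g_{k}(b))$. Let $x\in C_{k}\cap g_{k}^{-1}\mathcal{W}_{2}(g_{k}(a),g_{k}(b))$ and suppose, for contradiction, that $y:=T_{k}^{n}x\in(a,b)$ for some $n$. First I would note that $C_{k}\cap g_{k}^{-1}\mathcal{W}_{2}(g_{k}(a),g_{k}(b))$ is $T_{k}$-invariant: this is immediate from Lemma~\ref{lem} together with the $T_{2}$-invariance of $\mathcal{W}_{2}(g_{k}(a),g_{k}(b))$. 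Thus $y$ lies in this set, and by (b) we get $y\notin B_{k}$. On the other hand $y\in C_{k}$ with $a<y<b$ gives $g_{k}(a)\leq g_{k}(y)\leq g_{k}(b)$ by monotonicity, while $g_{k}(y)=T_{2}^{n}g_{k}(x)\notin(g_{k}(a),g_{k}(b))$ forces $g_{k}(y)\in\{g_{k}(a),g_{k}(b)\}$. A Cantor point can share its $g_{k}$-value with a strictly smaller point only by being the right endpoint of a gap of $C_{k}$, and with a strictly larger point only by being the left endpoint of a gap; since both kinds of endpoint are $k$-adic, this gives $y\in B_{k}$, contradicting $y\notin B_{k}$. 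Therefore $T_{k}^{n}x\notin(a,b)$ for all $n$, i.e. $x\in\mathcal{W}_{k}(a,b)$, which completes the equality.
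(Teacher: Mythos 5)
Your proposal is correct and follows essentially the same route as the paper: part (a) by the identical digit argument, part (b) by combining Lemma~\ref{B} with the fact that $g_{k}^{-1}\bigl([0,1]\setminus B_2\bigr)=C_{k}\setminus B_{k}$, and part (c) via the semiconjugacy $g_{k}\circ T_{k}=T_{2}\circ g_{k}$ on $C_{k}$ from Lemma~\ref{lem}. The only difference is one of completeness: the paper dismisses (c) with a one-line citation to Lemmas~\ref{gtilde} and~\ref{lem}, whereas you supply the full two-inclusion argument, and in particular your treatment of the reverse inclusion --- using (b) to rule out $y\in B_{k}$ and the fact that $g_{k}$ identifies only the endpoints of gaps of $C_{k}$ (which are $k$-adic) to overcome the mere weak monotonicity of $g_{k}$ --- is precisely the detail the paper leaves implicit, and it is sound.
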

\begin{proof}
	a) If $x\in \mathcal{W}_{k}(a,b)$, then $v=\pi_{k}^{-1}x$ cannot contain the symbols $1,\cdots,k-2$. Since if $v_r\notin \{0,k-1\}$, then $\pi_{k}\sigma^rv\in (a,b)$. Hence $T_{k}^rx\in (a,b)$, which is a contradiction. Thus $\mathcal{W}_{k}(a,b)\subseteq C_{k}$.\\
	b) If $a\in I_0$ and $b\in I_{k-1}$, then $g_{k}(a)<\dfrac{1}{2}$ and $g_{k}(b)>\dfrac{1}{2}$. \\
	Hence, by Lemma~\ref{B}, $\mathcal{W}_2(g_{k}(a),g_{k}(b))\cap B_2=\emptyset$. Therefore, 
	\[
	g_{k}^{-1}\mathcal{W}_2(g_{k}(a),g_{k}(b))\cap B_{k}=\emptyset.
	\]  
c) It follows from Lemmas~\ref{gtilde} and~\ref{lem}.\\
d) It follows from parts a), b), and c).
\end{proof}

%
\begin{proof}(Proof of Theorem~\ref{mainthm})
From Lemma~\ref{mainlemma} d), 
	\[
	d_2(g_{k}(a),g_{k}(b)) = \dfrac{1}{\log_k 2}d_k(a,b).
	\] 
	Hence by Theorem~\ref{asym}, $d_{k}(a,b)>0$ if and only if $g_{k}(b)<\chi(g_{k}(a))$.\\
	Note that $g_k$ is a H\"{o}lder continuous function of exponent $\log_k2$, hence
	\[
	d_2(g_{k}(a),g_{k}(b))\leq \dfrac{1}{\log_k2}d_k(a,b).
	\] 
	Hence it follows by Theorem~\ref{asym} that $d_{k}(a,b)>0$ if $g_{k}(b)<\chi(g_{k}(a))$.
\end{proof}

\begin{cor}
	(Notation as before) \\
	$d_{k}(a,b)$ is constant for all $(a,b)$ in the square $I_{0a_1\cdots a_N}\times I_{(k-1)b_1\cdots b_M}$.
\end{cor}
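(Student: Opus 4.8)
The plan is to reduce the statement to Theorem~\ref{mainthm}, which expresses $d_k(a,b)$ purely through the two values $g_k(a)$ and $g_k(b)$. The key observation is that as $(a,b)$ ranges over the square $I_{0a_1\cdots a_N}\times I_{(k-1)b_1\cdots b_M}$, both $g_k(a)$ and $g_k(b)$ remain fixed, so the right-hand side of the formula in Theorem~\ref{mainthm} cannot vary. Everything then follows by substitution.

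First I would check that the hypotheses of Theorem~\ref{mainthm} hold throughout the square, that is, $a<\frac{1}{k}$ and $b>\frac{k-1}{k}$ for every such pair. For the first coordinate, the interval $I_{0a_1\cdots a_N}$ has leading symbol $0$, and its right endpoint $\pi_k(0a_1\cdots a_N(k-1)0^\infty)$ is maximized when all $a_i=k-1$, giving $\pi_k(0(k-1)^{N+1}0^\infty)=\frac{1}{k}\bigl(1-k^{-(N+1)}\bigr)<\frac{1}{k}$; hence every $a\in I_{0a_1\cdots a_N}$ satisfies $a<\frac{1}{k}$. Symmetrically, $I_{(k-1)b_1\cdots b_M}$ has leading symbol $k-1$, and its left endpoint $\pi_k((k-1)b_1\cdots b_M10^\infty)=\frac{k-1}{k}+\sum_{n=2}^{M+1}\frac{b_{n-1}}{k^n}+\frac{1}{k^{M+2}}$ strictly exceeds $\frac{k-1}{k}$, since the final block $b_1\cdots b_M1$ contributes a strictly positive amount; hence every $b\in I_{(k-1)b_1\cdots b_M}$ satisfies $b>\frac{k-1}{k}$.

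Next I would invoke Corollary~\ref{cons}: since $g_k$ is constant on each interval $I_{0a_1\cdots a_N}$ and on each interval $I_{(k-1)b_1\cdots b_M}$, the values $g_k(a)$ and $g_k(b)$ depend only on the defining blocks $0a_1\cdots a_N$ and $(k-1)b_1\cdots b_M$, and not on the particular choice of $(a,b)$ within the square. Writing these common values as fixed constants $c_1=g_k(a)$ and $c_2=g_k(b)$, Theorem~\ref{mainthm} gives $d_k(a,b)=(\log_k 2)\,d_2(c_1,c_2)$, a quantity independent of $(a,b)$ in the square. This establishes the claim.

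I do not anticipate any genuine obstacle here: the corollary is essentially a direct combination of the constancy statement in Corollary~\ref{cons} with the transfer formula in Theorem~\ref{mainthm}. The only mild care needed is the endpoint bookkeeping in the first step, to confirm that the product of two closed intervals stays inside the region $a<\frac{1}{k}$, $b>\frac{k-1}{k}$ on which Theorem~\ref{mainthm} is valid.
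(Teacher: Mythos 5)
Your proposal is correct and follows exactly the paper's own route: the paper proves this corollary as ``an immediate consequence of Theorem~\ref{mainthm} and Corollary~\ref{cons}.'' Your additional endpoint bookkeeping (verifying $a<\frac{1}{k}$ and $b>\frac{k-1}{k}$ across the square so that Theorem~\ref{mainthm} applies) is a detail the paper leaves implicit, and it checks out.
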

\begin{proof}
	This is an immediate consequence of Theorem~\ref{mainthm} and Property (3) of $g_k$.
\end{proof}

\begin{rem}
	Glendinning and Sidorov~\cite{GS2} showed that $d_2(a,b)>0$, if $b-a<1-2a_{*}$, where $\pi_2^{-1}(a_{*})=\overline{0}\ \overline{1}\ \overline{10}\ \overline{1001}\ \cdots$, which is known as the Thue-Morse sequence, and $a_{*}\sim 0.41245$ is known as the Thue-Morse constant. An immediate consequence of Theorems~\ref{mainthm} and~\ref{asymcor} is Corollary~\ref{bound}. It is worth noting that 
	\[
	\pi_{k}^{-1}g_{k}^{-1}(a_{*})=\overline{0}\ \overline{k-1}\ \overline{(k-1)0}\ \overline{(k-1)00(k-1)}\ \cdots, 
	\]
	and in particular, $g_3^{-1}(a_*)\sim 0.3049$ and $g_4^{-1}(a_*)\sim 0.2374$. Table~\ref{table:1} shows various values of $g_{k}^{-1}(a_{*})$ as $k$ varies.
\end{rem}

\begin{cor}\label{bound}
	The Hausdorff dimension $d_{k}(a,b)>0$ if $b-a<1-2g_{k}^{-1}(a_{*})$. That is, any hole $H=(a,b)$ with size less than $1-2g_{k}^{-1}(a_{*})$ has $d_{k}(H)>0$. See Table~\ref{table:1}.
\end{cor}

\begin{table}[h!] 
	\centering
	\caption{Size of Hole.}\label{table:1}
	\begin{tabular}{|c|c|c|} 
		\hline\hline
$\textbf{k}$ & $\mathbf{g_{k}^{-1}(a_{*})\sim}$&$\mathbf{1-2g_{k}^{-1}(a_{*})\sim}$	\\\hline \hline
2 & 0.4124 & 0.1751	\\\hline
3 & 0.3049 & 0.3901	\\\hline
4 & 0.2374 & 0.5253	\\\hline
5 & 0.1933 & 0.6134	\\\hline
6 & 0.1627 & 0.6746	\\\hline
7 & 0.1403 & 0.7194	\\\hline
8 & 0.1233 & 0.7535	\\\hline
9 & 0.1099 & 0.7802	\\\hline
10 & 0.09909 & 0.8018	\\\hline
	\end{tabular}
\end{table}

\begin{rem}
In \cite{RB}, the authors discuss about the holes $(a,b)$ when $T\vert_{\mathcal{W}_2(a,b)}$ is conjugate to a subshift of finite type. Let 
\[
D=\left\lbrace(a,b)\in \left(0,1/2\right)\times\left(1/2,1\right)\ \vert\ d_2(a,b)>0\right\rbrace.
\] 
They show that the set 
\[
\{(a,b)\in D \ \vert\ T_2\vert_{\mathcal{W}_2(a,b)} \text{ is conjugate to a subshift of finite type}\}
\]
is open, dense and has full Lebesgue measure in $D$. A similar result will be true here using Theorem~\ref{mainthm}. If
\[
D'=\left\lbrace(a,b)\in \left(0,1/k\right)\times\left((k-1)/k,1\right)\ \vert\ d_k(a,b)>0\right\rbrace,
\] 
then the set 
\[
\{(a,b)\in D' \ \vert\ T_k\vert_{\mathcal{W}_k(a,b)} \text{ is conjugate to a subshift of finite type}\}
\]
is open, dense and has full Lebesgue measure in $D'$. 
	\end{rem}

\section{Concluding Remarks}\label{examgen}
In this article, we studied the $k$-transformation $T_k$ for integers $k>2$, and examined the intervals $(a,b)$ for which the Hausdorff dimension of $\mathcal{W}_{k}(a,b)$ is positive. The map $T_k$ can be extended to $T_\mathbf{k}=T_{k_1}\times \cdots \times T_{k_r}$ on $[0,1)^r$, with rectangular holes $R_{\mathbf{a},\mathbf{b}}=(a_1,b_1)\times \cdots \times (a_r,b_r)$, where $\mathbf{k}=(k_1,\cdots,k_r)$, $\mathbf{a}=(a_1,\cdots,a_r)$, and $\mathbf{b}=(b_1,\cdots,b_r)$. If 
\[
\mathcal{W}_\mathbf{k}(\mathbf{a},\mathbf{b})=\{\textbf{x}\in [0,1)^r\ \vert\ T_\mathbf{k}^n \mathbf{x}\notin R_{\mathbf{a},\mathbf{b}}, n\geq 0\}, 
\]
then it is easy to see that $\mathcal{W}_{k_1}(a_1,b_1)\times\cdots\times \mathcal{W}_{k_r}(a_r,b_r)\subseteq \mathcal{W}_\mathbf{k}(\mathbf{a},\mathbf{b})$. In general, it is not straightforward to estimate the Hausdorff dimension of $\mathcal{W}_\mathbf{k}(\mathbf{a},\mathbf{b})$. This would be an interesting problem to study.

\section{Acknowledgements}
The research is partially supported by Center for Research on Environment and Sustainable Technologies (CREST), IISER Bhopal.


%

\end{document}